\documentclass{amsart}
\usepackage{amsmath,enumerate,enumitem,amsfonts,color,amsthm,hyperref,bbm} 

\usepackage{appendix}

\title[Discretization of the Martingale Representation Theorem]{Scaling Limits for the Discretization of the Martingale Representation Theorem}

\author{Yan Dolinksy} 
\address{Department of Statistics, Hebrew University}
\email{yan.dolinsky@mail.huji.ac.il}

\date{\today}

\numberwithin{equation}{section}  
\newtheorem{defn}{Definition}[section]

\newtheorem{remark}[defn]{Remark}
\newtheorem{theorem}[defn]{Theorem}

\begin{document}

\begin{abstract}
In this note, we derive a large-deviation-type scaling limit for a discretization of the Martingale Representation Theorem. Somewhat surprisingly, and to the best of our knowledge, this result has not been previously obtained in the literature.
\end{abstract}

\keywords{Scaling Limits, Discretization of Stochastic Integrals, Martingale Representation Theorem}
\thanks{Partially supported by the ISF grant 305/25. }

\maketitle

\section{Introduction and the Main Result}\label{sec:1}
Let
$W_t$, $t\geq 0$, be a standard one-dimensional Brownian motion defined on a probability space $(\Omega,\mathcal F,\mathbb P)$ and endowed with its augmented natural filtration $(\mathcal F_t)_{t\geq 0}$.
The well-known Martingale Representation Theorem (see \cite[Section 4.7]{KS}) states that for a sufficiently regular function \(f:\mathbb R\to\mathbb R\),
\[
f(W_1)=u(0,0)+\int_{0}^{1} u_x(t,W_t)\,dW_t,
\]
where \(u=u(t,x)\) is the unique solution (under suitable growth conditions) of the heat equation
\[
u_t+\frac{1}{2}u_{xx}=0,
\qquad
u(1,x)=f(x),
\]
and is given by
\[
u(t,x):=\mathbb E\left[f(x+W_{1-t})\right]
=\frac{1}{\sqrt{2\pi}}
\int_{\mathbb R}
f(x+\sqrt{1-t}\,y)e^{-\frac{y^2}{2}}\,dy.
\]

From the point of view of mathematical finance, the Martingale Representation Theorem can be viewed as a perfect replication result, and the process
\[
\bigl(u_x(t,W_t)\bigr)_{t\in[0,1]}
\]
is the corresponding delta-hedging strategy.
In real market conditions, however, portfolio rebalancing can occur only at discrete times.
Roughly speaking, as the frequency of rebalancing increases and the discrete hedging strategy approaches continuous rebalancing, the corresponding portfolio values converge to those of the continuously rebalanced portfolio.

We now arrive at our main result, which will be proved in the next section.

\begin{theorem}\label{thm1}
Let \(f\in C^3(\mathbb{R})\) be such that \(f''\) is uniformly bounded,
\[
\sup_{x\in\mathbb{R}} f''(x) < 1,
\]
and \(f'''\) is Lipschitz continuous.

Set
\[
S_n
:=
f(W_1)-u(0,0)
-
\sum_{k=0}^{n-1}
u_x\!\left(\frac{k}{n},W_{\frac{k}{n}}\right)
\left(
W_{\frac{k+1}{n}}
-
W_{\frac{k}{n}}
\right),
\qquad n\in\mathbb N,
\]
and define \(\Psi:(-\infty,1)\to\mathbb R_+\) and \(g:[0,1]\to\mathbb R_+\) by
\[
\Psi(z)
=
\frac{-z-\log(1-z)}{2},
\qquad
g(t)
:=
\max_{x\in\mathbb R}
\Psi\!\bigl(u_{xx}(t,x)\bigr).
\]

Then
\[
\lim_{n\to\infty}
\frac{1}{n}
\log
\mathbb E\!\left[e^{\,nS_n}\right]
=
\int_0^1 g(t)\,dt.
\]
\end{theorem}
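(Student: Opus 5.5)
We decompose $S_n$ into a sum of increments over the intervals $[k/n,(k+1)/n]$ and compute the exponential moment one interval at a time, using the Markov property of $W$. Set $h=1/n$, $t_k=k/n$. Since $u(1,W_1)=f(W_1)$ and $u$ solves the heat equation,
\[
S_n=\sum_{k=0}^{n-1}\xi_k,\qquad \xi_k:=u\bigl(t_{k+1},W_{t_{k+1}}\bigr)-u\bigl(t_k,W_{t_k}\bigr)-u_x\bigl(t_k,W_{t_k}\bigr)\Delta_k ,
\]
where $\Delta_k=W_{t_{k+1}}-W_{t_k}$. Write $\Delta_k=\sqrt h\,Z$ with $Z\sim N(0,1)$ independent of $\mathcal F_{t_k}$. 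A Taylor expansion in both variables, with $u_t=-\frac12u_{xx}$, gives
\[
n\,\xi_k=\tfrac12 u_{xx}(t_k,W_{t_k})\,(Z^2-1)+R_k .
\]
The remainder $R_k$ has size $O(\sqrt h\,|Z|^3+h(1+Z^4))$. This is controlled because $u_{xxx},u_{xxxx}$ are bounded uniformly in $t$: differentiating under the Gaussian integral passes the bounds on $f'''$ and on the Lipschitz constant of $f'''$ to $u$.

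For a deterministic $a<1$,
\[
\log\mathbb E\, e^{\frac a2(Z^2-1)}=\frac{-a-\log(1-a)}{2}=\Psi(a).
\]
Hence, conditionally on $\mathcal F_{t_k}$,
\[
\mathbb E\bigl[e^{n\xi_k}\mid\mathcal F_{t_k}\bigr]=\exp\Bigl(\Psi\bigl(u_{xx}(t_k,W_{t_k})\bigr)+o(1)\Bigr).
\]
The assumption $\sup f''<1$ propagates to $\sup_{t,x}u_{xx}(t,x)\le \sup f''=:1-2\delta<1$, since $u_{xx}$ is an average of $f''$. This keeps $\Psi$ away from its singularity. The $o(1)$ must be uniform in $x$. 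To get this, write $e^{R_k}$ as a perturbation and use Hölder with exponent close to $1$. The margin $\delta$ lets us absorb the cross terms $\sqrt h |Z|^3$, which are not exponentially integrable by themselves against $e^{\frac a2 Z^2}$ unless treated carefully.

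\textbf{Upper bound.} Iterate the tower property backwards from $k=n-1$. At each step, bound $\Psi(u_{xx}(t_k,W_{t_k}))\le g(t_k)$. This gives
\[
\mathbb E\, e^{nS_n}\le \exp\Bigl(\sum_k g(t_k)+o(n)\Bigr),
\]
and $\frac1n\sum_k g(t_k)\to\int_0^1 g$ because $g$ is continuous. Continuity of $g$ follows from $u_{xx}$ being uniformly continuous in $t$, uniformly in $x$.

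\textbf{Lower bound.} The maximum in $g(t)$ is attained at a point that may move with $t$, while $W$ wanders. We handle this by a change of measure. The term $\frac a2(Z^2-1)$ does not involve $x$, so the cost of steering $W$ is not paid through the exponent directly. Instead, fix $\varepsilon>0$ and choose a smooth deterministic path $\phi$ with
\[
\Psi\bigl(u_{xx}(t,\phi(t))\bigr)\ge g(t)-\varepsilon .
\]
Such a path exists, modulo a bounded-jump argument approximated by steep smooth paths, using uniform continuity. Then restrict to the event $\{\sup_t|W_t-\phi(t)|<\eta\}$. This event has positive probability, independent of $n$, so it costs nothing at the $\frac1n\log$ scale.

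The conditioning is the delicate part. Conditioning on the tube alters the law of the increments $Z$, and we need the exponential tilting by $e^{\frac a2 Z^2}$ to still be realized. To arrange this, use Girsanov for the drift $\phi'$, which changes $Z$ only by a shift of order $\sqrt h$. Then note that, under the tilted measure $\propto e^{\frac a2 Z^2}$, the $Z$'s are i.i.d. $N(0,1/(1-a))$ with mean zero. The walk then has quadratic variation of order $1$ per unit time in the $h$-scaled sense, so it stays in the tube with probability bounded below; a martingale or Doob maximal inequality under the tilted measure controls this. Doing this step-by-step, with tilts depending on the current position, gives
\[
\liminf_{n\to\infty} \frac1n\log\mathbb E\, e^{nS_n}\ge\int_0^1 g-C\varepsilon .
\]

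\textbf{Main obstacle.} I expect the hardest step to be the lower bound, and specifically verifying that the tilted walk stays near $\phi$. Under the tilt the increments have variance $1/(1-a)>1$, which may be large when $a$ is negative and large. Only the local time scale $h$ makes this harmless, so the tube probability must be shown to be bounded below uniformly in $n$. Secondary to this is the uniform remainder control in the conditional Laplace estimate.
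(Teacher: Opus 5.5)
Your overall architecture is the paper's. For the upper bound you telescope into one-step increments, use $\log\mathbb E\,e^{\frac a2(Z^2-1)}=\Psi(a)$, and apply the tower property. For the lower bound you use a tube around a near-optimal path and the change of measure under which $Z_k$ is conditionally $N\bigl(0,1/(1-u_{xx}(t_k,W_{t_k}))\bigr)$.

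The gap is exactly the step you flag as the main obstacle. You never show that the tube probability under the tilted measure is bounded below uniformly in $n$, and the tools you name cannot do it. After the Girsanov shift by $\phi'$ you have a martingale with conditional increment variances $h/(1-u_{xx})\in[h/(1+\sup|f''|),\,h/(1-\sup f'')]$. Its total variance on $[0,1]$ is therefore of order one, while the tube width $\eta$ is small, since it is dictated by $\varepsilon$. Doob's inequality then gives only $\mathbb P(\sup_t|M_t|<\eta)\ge 1-C/\eta^2$ with $C$ of order one, which is vacuous. Your heuristic ``quadratic variation of order $1$, hence it stays in the tube'' runs the wrong way: order-one quadratic variation makes leaving a thin tube typical. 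Staying inside is a small-ball/support property that needs a genuine argument. Likewise, positivity of the tube under $\mathbb P$ is irrelevant; what matters is its mass under the tilted measure.

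The paper closes this with an invariance principle. Under $\mathbb Q_n$, the interpolation $W^n_t=W_{\lfloor nt\rfloor/n}$ converges weakly (Ethier--Kurtz, Thm.~7.4.1) to $dX_t=(1-u_{xx}(t,X_t))^{-1/2}dW_t$. This coefficient is Lipschitz and bounded above and away from zero, so $X$ has full support, and portmanteau for the open tube gives $\liminf_n\mathbb Q_n(E_n)>0$; no Girsanov step is needed. An elementary alternative is chaining:
\begin{enumerate}
\item cut $[0,1]$ into $m$ blocks with $1/m\ll\eta^2$, so Doob controls excursions within each block;
\item show by anti-concentration (e.g.\ Paley--Zygmund on the conditionally Gaussian increments) that each block ends in the inner half of the tube with probability at least $c>0$, uniformly in $n$ and the starting point;
\item multiply via the Markov property to get $c^m$.
\end{enumerate}

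Two smaller points. First, the one-step remainder cannot be handled by H\"older with exponent near $1$: $\mathbb E\,e^{q\sqrt h|Z|^3}=\infty$ for all $q,h>0$, and in the tails the true remainder is of size $Z^2$ with an $O(1)$ coefficient. You need to truncate, as the paper does at $|Z|\le\sqrt n/\log n$ (any $|Z|\le\eta\sqrt n$ works). On the bulk, $\sqrt h|Z|^3\le\eta Z^2$ is absorbed into the quadratic coefficient using the margin $1-\sup u_{xx}>0$. On the tail, the global one-sided bound $n\xi_k\le b+\frac a2Z^2$ (from $|u_t|\le b$, $u_{xx}\le a<1$) makes the contribution negligible. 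In the lower bound this corresponds to restricting to an event like $F_n$, whose $\mathbb Q_n$-probability tends to $1$.

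Second, a continuous $\phi$ with $\Psi(u_{xx}(t,\phi(t)))\ge g(t)-\varepsilon$ for every $t$ need not exist, because transitions between near-maximizers pass through bad points. Only the integral version $\int_0^1\Psi(u_{xx}(t,\phi(t)))\,dt\ge\int_0^1 g(t)\,dt-\varepsilon$ is available, and that is all the argument needs. Also, the tilted variance $1/(1-a)$ is large only when $a$ is near $1$, not when $a$ is very negative.
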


Although discrete-time hedging in the Brownian setting has been studied extensively, the existing literature has focused primarily on the approximation of hedging strategies and stochastic integrals. The convergence of discrete-time delta hedging on equidistant grids was analyzed in \cite{BKL:00,GT:01,HM:05}, while \cite{G:05,GT:09,GM:12} considered more general deterministic discretization schemes. The papers \cite{F:11,F:14,CRFT:16} further allow for discretization along random stopping times.

These works are concerned with discretization errors under a fixed preference specification. By contrast, our analysis is carried out in a large-deviations regime induced by an exponential scaling of the risk-aversion parameter and is a natural continuation of
the papers \cite{CD,DZ}, which investigated the following question: if an investor is allowed to rebalance only on an equidistant grid of \(n\) trading times, what is the limiting behavior, as \(n\to\infty\), under exponential utility with risk-aversion parameter \(n\)? The corresponding asymptotically optimal rebalancing strategy is characterized by a nonlinear PDE that does not admit an explicit solution.

In the present paper, we analyze the same scaling regime but replace the asymptotically optimal rebalancing strategy with the simpler strategy obtained by directly discretizing the continuous-time delta hedge.

The assumption
\[
\sup_{x\in\mathbb R} f''(x) < 1
\]
is natural for ensuring that the scaling limit is finite, and it was also imposed in \cite{CD,DZ}. To gain some intuition, consider
\[
f(x)=\frac{1}{2}x^2.
\]
Then
\[
S_n=\frac{n}{2}\left(\sum_{k=0}^{n-1}\left(W_{\frac{k+1}{n}}-W_{\frac{k}{n}}\right)^2-1\right),
\]
and therefore
\[
\mathbb{E}\!\left[e^{\,S_n}\right]=\infty
\]
for all \(n\).

\section{Proof of Theorem \ref{thm1}}\label{sec:2}
\begin{proof}
The proof is divided into two steps.

\medskip

\noindent\textbf{Step I.}
In this step, we show the upper bound
$$
\limsup_{n\to\infty}
\frac{1}{n}
\log\!\Bigl(
\mathbb{E}\!\left[e^{\,nS_n}\right]
\Bigr)
\le
\int_0^1 g(t)\,dt .
$$

Choose $n\in\mathbb N$. Clearly,
$
S_n=\sum_{j=0}^{n-1} I_j^n$,
where
$$
I_j^n=
u\!\left(\frac{j+1}{n},W_{\frac{j+1}{n}}\right)
-
u\!\left(\frac{j}{n},W_{\frac{j}{n}}\right)
-
u_x\!\left(\frac{j}{n},W_{\frac{j}{n}}\right)
\left(
W_{\frac{j+1}{n}}-W_{\frac{j}{n}}
\right).
$$

Thus, to complete the proof of the upper bound, it suffices to show that there exists a constant $L>0$ such that, for all $n\in\mathbb N$ and $j=0,1,\dots,n-1$,
\begin{equation}\label{1}
\mathbb E\!\left[e^{\,nI_j^n}\,\middle|\,\mathcal F_{j/n}\right]
 \le e^{\frac{L}{\log n}+g\left(\frac{j}{n}\right)}+\frac{L}{\log n}.
\end{equation}

Indeed, \eqref{1} implies that for every $\epsilon>0$ and all sufficiently large $n$,
$$
\mathbb E\!\left[e^{\,nI_j^n}\,\middle|\,\mathcal F_{j/n}\right]
\le
e^{\epsilon+g\left(\frac{j}{n}\right)},
\qquad j=0,1,\dots,n-1.
$$

Hence, by the tower property of conditional expectation,
\[
\mathbb E\!\left[e^{\,n\sum_{j=0}^{n-1} I_j^n}\right]
\le
e^{\,n\epsilon+\sum_{j=0}^{n-1} g\left(\frac{j}{n}\right)}.
\]

It follows that, since $g$ is continuous,
\[
\limsup_{n\to\infty}
\frac1n
\log
\Bigl(
\mathbb E\!\left[e^{\,n\sum_{j=0}^{n-1} I_j^n}\right]
\Bigr)
\le
\epsilon+\int_{0}^{1} g(t)\,dt.
\]

Since $\epsilon>0$ was arbitrary, we obtain the desired upper bound.

It remains to prove \eqref{1}. Fix $j$ and set
\[
Z^n_j:=\sqrt{n}\Bigl(W_{\frac{j+1}{n}}-W_{\frac{j}{n}}\Bigr).
\]

Next, the Lipschitz continuity of \(f'''\) implies that \(u_t\) and \(u_{xx}\) are uniformly Lipschitz continuous on \([0,1]\times\mathbb{R}\).
Moreover, the constants
\[
a:=\sup_{x\in\mathbb{R}} f''(x),
\qquad
b:=\frac12\sup_{x\in\mathbb{R}} |f''(x)|
\]
satisfy
\[
u_{xx}(t,x)\le a,
\qquad
|u_t(t,x)|\le b,
\]
for all $(t,x)\in[0,1]\times\mathbb{R}$.
Hence, Taylor's theorem yields 
\begin{align*}
n I_j^n
&=
n\left(
u\!\left(\frac{j+1}{n},W_{\frac{j+1}{n}}\right)
-
u\!\left(\frac{j}{n},W_{\frac{j+1}{n}}\right)
\right)
\\
&\quad
+n\Biggl(
u\!\left(\frac{j}{n},W_{\frac{j+1}{n}}\right)
-
u\!\left(\frac{j}{n},W_{\frac{j}{n}}\right)
-
u_x\!\left(\frac{j}{n},W_{\frac{j}{n}}\right)
\left(
W_{\frac{j+1}{n}}-W_{\frac{j}{n}}
\right)
\Biggr)
\\
&\le
u_t\!\left(\frac{j}{n},W_{\frac{j}{n}}\right)
\mathbf 1_{\left\{
|Z^n_j|<\frac{\sqrt n}{\log n}
\right\}}
+
\frac12\,
u_{xx}\!\left(\frac{j}{n},W_{\frac{j}{n}}\right)
|Z^n_j|^2
\mathbf 1_{\left\{
|Z^n_j|<\frac{\sqrt n}{\log n}
\right\}}
\\
&\quad
+\frac{\hat L}{\log n}
+
\left(
b+\frac{a}{2}|Z^n_j|^2
\right)
\mathbf 1_{\left\{
|Z^n_j|>\frac{\sqrt n}{\log n}
\right\}}
\end{align*}
for some constant $\hat L>0$. 

Since \(Z^n_j\sim N(0,1)\) is independent of \(\mathcal F_{j/n}\) and
\(u_t=-\frac12 u_{xx}\), we obtain
\begin{align*}
\mathbb E\!\left[e^{\,nI_j^n}\,\middle|\,\mathcal F_{j/n}\right]
&\le
\mathbb E\!\left[
\exp\!\left(
\frac{\hat L}{\log n}
+
u_t\!\left(\frac{j}{n},W_{\frac{j}{n}}\right)
+
\frac{
u_{xx}\!\left(\frac{j}{n},W_{\frac{j}{n}}\right)
}{2}|Z^n_j|^2
\right)
\right]
\\
&\quad+
\mathbb E\!\left[
\exp\!\left(
b+\frac{a}{2}|Z^n_j|^2
\right)
\mathbf 1_{\left\{
|Z^n_j|>\frac{\sqrt n}{\log n}
\right\}}
\right]
\\
&=
\exp\!\left(
\frac{\hat L}{\log n}
+
\Psi\!\left(
u_{xx}\!\left(\frac{j}{n},W_{\frac{j}{n}}\right)
\right)
\right)
\\
&\quad+
\mathbb E\!\left[
\exp\!\left(
b+\frac{a}{2}|Z^n_j|^2
\right)
\mathbf 1_{\left\{
|Z^n_j|>\frac{\sqrt n}{\log n}
\right\}}
\right].
\end{align*}

The last expectation is bounded by \(\tilde L/\log n\) for some constant
\(\tilde L>0\), by a direct estimate for the standard normal variable \(Z^n_j\).
This yields \eqref{1} and completes the first step.

\medskip
\noindent\textbf{Step II.} We prove the lower bound
\[
\liminf_{n\to\infty}\frac1n\log \mathbb E\!\left[e^{nS_n}\right]
\ge
\int_0^1 g(t)\,dt.
\]

Fix $\epsilon>0$. By the uniform continuity of $u_{xx}$ and $g$, there exists a continuous function $h:[0,1]\to\mathbb R$ such that
\begin{equation}\label{2}
\int_0^1 \Psi(u_{xx}(t,h(t)))\,dt
\ge
\int_0^1 g(t)\,dt-\epsilon.
\end{equation}

Let $\delta>0$ be such that
\begin{equation}\label{3}
|x-y|< \delta
\quad\Longrightarrow\quad
\sup_{0\leq t\leq 1}\Big|
\Psi(u_{xx}(t,x))
-\Psi(u_{xx}(t,y))
\Big|
< \epsilon.
\end{equation}

Fix $n\in\mathbb N$. Let $I_j^n$ and $Z_j^n$
be as in Step I. Define the sets
\[
E_n:=
\left\{
\max_{0\le j\le n}
\Big|
W_{\frac{j}{n}}-h\!\Big(\frac{j}{n}\Big)
\Big|
< \delta
\right\},
\qquad
F_n:=
\left\{
\max_{0\le j<n}|Z^n_j|
\le
\frac{\sqrt n}{\log n}
\right\}.
\]

Using the same arguments as in Step I, Taylor's theorem yields that for any $j=0,1,\dots,n-1$, on the event $E_n\cap F_n$,
\[
nI_j^n
\ge
u_t\!\Big(\frac{j}{n},W_{\frac{j}{n}}\Big)
+\frac12
u_{xx}\!\Big(\frac{j}{n},W_{\frac{j}{n}}\Big)|Z^n_j|^2
-\frac{\hat L}{\log n}.
\]

Hence,
\begin{equation*}
\mathbb{E}\!\left[e^{nS_n}\right]
\ge
e^{-\frac{\hat L n}{\log n}}\,
\mathbb{E}\!\left[
\prod_{j=0}^{n-1}
\exp\!\left(
\frac12\,
u_{xx}\!\left(\frac{j}{n},W_{\frac{j}{n}}\right)
\left(|Z_j^n|^2-1\right)
\right)
\mathbf{1}_{E_n\cap F_n}
\right].
\end{equation*}

Define a probability measure $\mathbb{Q}_n$ by
\[
\frac{d\mathbb{Q}_n}{d\mathbb P}
=
\prod_{j=0}^{n-1}
\exp\!\left(
\frac12\,
u_{xx}\!\left(\frac{j}{n},W_{\frac{j}{n}}\right)
\left(|Z_j^n|^2-1\right)
-
\Psi\!\left(
u_{xx}\!\left(\frac{j}{n},W_{\frac{j}{n}}\right)
\right)
\right).
\]

For a standard normal random variable \(Z \sim N(0,1)\), we have
\[
\mathbb{E}\!\left[
\exp\!\left(
\frac{c}{2}\,(Z^2-1)
\right)
\right]
=
e^{\Psi(c)},
\qquad c<1,
\]
and therefore \(\mathbb{Q}_n\) is indeed a probability measure.
Consequently, from \eqref{3},
\begin{align*}
\mathbb E[e^{nS_n}]
&\ge
e^{-\frac{\hat Ln}{\log n}}
\mathbb E_{\mathbb Q_n}\!\left[
\exp\!\left(
\sum_{j=0}^{n-1}
\Psi\!\left(
u_{xx}\!\left(\frac{j}{n},W_{\frac{j}{n}}\right)
\right)
\right)
\mathbf 1_{E_n\cap F_n}
\right]
\\
&\ge
\exp\!\left(
\sum_{j=1}^n
\Psi\!\left(
u_{xx}\!\left(\frac{j}{n},h\!\left(\frac{j}{n}\right)\right)
\right)
-
n\epsilon
-
\frac{\hat L n}{\log n}
\right)
\mathbb Q_n(E_n\cap F_n).
\end{align*}

Since $\epsilon>0$ was arbitrary, in view of \eqref{2}, it remains to show that
\[
\liminf_{n\to\infty}
\mathbb Q_n(E_n\cap F_n)>0.
\]

To this end, observe that under $\mathbb Q_n$, conditionally on $\mathcal F_{j/n}$,
the random variable $Z^n_j$ is Gaussian with mean $0$ and variance
\[
\frac{1}{1-u_{xx}\!\left(\frac{j}{n},W_{\frac{j}{n}}\right)}.
\]

In particular,
\[
\lim_{n\to\infty}\mathbb Q_n(F_n)=1.
\]
Thus, it remains to show that
\begin{equation}\label{6}
\liminf_{n\to\infty}\mathbb Q_n(E_n)>0.
\end{equation}

For any $n\in\mathbb N$, define the piecewise-constant interpolation
\[
W^n_t:=W_{\lfloor nt\rfloor/n},
\qquad
t\in[0,1].
\]

Since $u_{xx}$ is Lipschitz continuous and
\[
-\infty<\inf_{(t,x)\in [0,1]\times\mathbb R} u_{xx}(t,x)\le \sup_{(t,x)\in [0,1]\times\mathbb R} u_{xx}(t,x)<1,
\]
Theorem 7.4.1 in \cite{EthierKurtz} yields the weak convergence
\[
W^n
\Longrightarrow
X,
\]
where $X$ is the unique strong solution of the SDE
\[
dX_t
=
\frac{dW_t}{\sqrt{1-u_{xx}(t,X_t)}},
\qquad
X_0=0.
\]

Next, $\frac{1}{\sqrt{1-u_{xx}}}$ is bounded, and so the diffusion $X$ has full support (see Chapter VIII in \cite{RY}), and therefore
\[
\mathbb P\!\left(
\sup_{0\le t\le1}|X_t-h(t)|<\delta
\right)
>0.
\]
Together with the relation
\[
\liminf E_n
\supset
\left\{
\sup_{0\le t\le1}|X_t-h(t)|<\delta
\right\},
\]
this yields \eqref{6}.
\end{proof}

\begin{remark}
The proof extends with only minor modifications to the multidimensional setting.
Let $W=(W^1,\ldots,W^d)$ be a standard $d$-dimensional Brownian motion and let
$f:\mathbb R^d\to\mathbb R$ be such that \(D^2f\) is uniformly bounded,
\[
\sup_{x\in\mathbb R^d}
\lambda_{\max}\!\bigl(D^2f(x)\bigr)
<1,
\]
and \(D^3f\) is Lipschitz continuous.

Define
\[
u(t,x):=\mathbb E\bigl[f(x+W_{1-t})\bigr],
\qquad
(t,x)\in[0,1]\times\mathbb R^d.
\]
Then \(u\) solves
\[
u_t+\frac12\Delta u=0,
\qquad
u(1,x)=f(x),
\]
and the Martingale Representation Theorem yields
\[
f(W_1)
=
u(0,0)
+
\int_0^1 \nabla u(t,W_t)\cdot dW_t.
\]

For \(n\in\mathbb N\), set
\[
S_n
:=
f(W_1)-u(0,0)
-
\sum_{k=0}^{n-1}
\nabla u\!\left(\frac{k}{n},W_{\frac{k}{n}}\right)
\cdot
\left(
W_{\frac{k+1}{n}}
-
W_{\frac{k}{n}}
\right).
\]

Then the same argument yields
\[
\lim_{n\to\infty}
\frac1n
\log\mathbb E\!\left[e^{\,nS_n}\right]
=
\int_0^1 g(t)\,dt,
\]
where
\[
g(t)
:=
\sup_{x\in\mathbb R^d}
\Psi\!\bigl(D^2u(t,x)\bigr),
\]
and, for symmetric matrices \(A\) satisfying \(\lambda_{\max}(A)<1\),
\[
\Psi(A)
:=
\frac12
\left(
-\operatorname{tr}(A)-\log\det(I-A)
\right).
\]

Indeed, the only change in the proof is the replacement of the one-dimensional Gaussian identity
\[
\mathbb E\!\left[
e^{\frac12 a(Z^2-1)}
\right]
=
e^{\Psi(a)}
\]
by its multidimensional counterpart
\[
\mathbb E\!\left[
\exp\!\left(
\frac12\bigl(Z^\top A Z-\operatorname{tr}(A)\bigr)
\right)
\right]
=
\exp\!\left(
\frac12
\left(
-\operatorname{tr}(A)-\log\det(I-A)
\right)
\right),
\]
where \(Z\sim N(0,I_d)\).
\end{remark}

\end{document}